            	            %

\documentclass[a4paper, 11pt]{article}

\usepackage{calc, amsmath, amsthm, a4, latexsym, amssymb,  url, tabularx} 
\usepackage[dvips]{graphicx}
\usepackage{soul}
\usepackage{color}


\setlength{\parindent}{0pt}
\setlength{\parskip}{1ex plus 0.5ex minus 0.2ex}
\addtolength{\textwidth}{1cm}
\setlength{\oddsidemargin}{(\paperwidth - \textwidth)/2 - 72pt}



\newtheorem{thm}{Theorem}[section]
\newtheorem{lemma}[thm]{Lemma}

\newtheorem{prop}[thm]{Proposition}

\theoremstyle{definition}

\newtheorem{defi}[thm]{Definition}

\newtheorem{rem}[thm]{Remark}

\newcommand{\be}[1]{\begin{equation}\label{#1}}
\newcommand{\ee}{\end{equation}}
\newcommand{\ba}{\begin{array}}
\newcommand{\ea}{\end{array}}
\newcommand{\bal}{\begin{aligned}}
\newcommand{\eal}{\end{aligned}}


\newcommand{\N}{\mathbb{N}}

\newcommand{\Z}{\mathbb{Z}}
\newcommand{\E}{\mathbb{E}}

\newcommand{\p}{\mathbb{P}}
\renewcommand{\P}{\mathbb{P}}

\newcommand{\cA}{\mathcal{A}}

\newcommand{\cD}{\mathcal{D}}
\newcommand{\cE}{\mathcal{E}}

\newcommand{\cG}{\mathcal{G}}
\newcommand{\cH}{\mathcal{H}}

\newcommand{\cS}{\mathcal{S}}
\newcommand{\cT}{\mathcal{T}}

\newcommand{\cF}{\mathcal{F}}

\newcommand{\ft}{\mathfrak{t}}

\newcommand{\1}{1\hspace{-0.098cm}\mathrm{l}}
\newcommand{\ind}{1\hspace{-0.098cm}\mathrm{l}}

\newcommand{\la}{\lambda}
\newcommand{\Om}{\Omega}
\newcommand{\eps}{\varepsilon}
\newcommand{\om}{\omega}
\renewcommand{\phi}{\varphi}

\newcommand{\ssup}[1] {{\scriptscriptstyle{({#1}})}}




\newcommand{\ra}{\rightarrow}


\newcommand{\gw}{T}

\newcommand{\hopcount}{graph }


\newcommand{\wei}{\mathrm{w}}


\newcommand{\sumtwo}[2]{\sum_{\substack{#1 \\ #2}}} 

\begin{document}

\begin{center}
{\Large \bf
Local neighbourhoods for first passage percolation\\[2mm]on the 
configuration model
}\\[5mm]

\vspace{0.7cm}
\textsc{Steffen Dereich\footnote{Institut f\"ur Mathematische Statistik,
Westf\"alische Wilhelms-Universit\"at M\"unster, Einsteinstra\ss{}e 62, 48149 M\"unster, Germany.
}} 
and \textsc{Marcel Ortgiese\footnote{Department of Mathematical Sciences, University of Bath, Claverton Down, Bath, BA2 7AY,
United Kingdom.}
} 
\\[0.8cm]
{\small(Version of November 17, 2017)} 
\end{center}

\vspace{0.3cm}

\begin{abstract}
  \noindent 
We consider first passage percolation on the configuration model. Once the network has been generated each edge is assigned an i.i.d.\ weight modeling the passage time of a message along this edge. Then independently two vertices are chosen uniformly at random, a sender and a recipient, and all edges along  the  geodesic connecting the two vertices are coloured in red (in the case that both vertices are in the same component). 
In this article we prove local limit theorems for the coloured graph around the recipient in the spirit of Benjamini and Schramm. We consider the explosive regime, in which case the random distances are of finite order, and the Malthusian regime, in which case the random distances are of logarithmic order.

  \par\medskip
\footnotesize
\noindent{\emph{2010 Mathematics Subject Classification}:}
  Primary\, 05C80, 
  \ Secondary\, 60J80.
  \par\medskip
\noindent{\emph{Keywords:} First passage percolation, random graphs, configuration model, local limit, geodesics, branching processes.}
\end{abstract}

\section{Introduction and main results}

\subsection{Introduction}

Originally, first passage percolation  was introduced in 1965 by Hammersley and Welsh~\cite{HamWel65}  as  a  model  for a  fluid  flow  through  a  random  medium.  
Generally, for a given graph (in the original example the lattice $\Z^d$)  one assigns each edge a strictly positive i.i.d.\ weight and then endows the graph with the metric induced by the weights. In the original model the weights represent the time it takes for the fluid to flow though the edge and a significant amount of research was concerned with the structure and length of geodesics for vertices that are far apart. We refer to \cite{auffinger2017} for a recent review.

In this article we focus on the case where the graph (network) itself is generated at random. More explicitly, we consider the configuration model where first  finitely many half-edges are attached to the vertices of a finite vertex set and then all half-edges are paired uniformly at random. The model has attracted significant attention recently, since on one hand it is possible to generate graphs with heavy tailed degree distributions (a  phenomenon  observed in real world networks) by choosing the half-edges  appropriately and  since on the other hand the uniform pairing of the half-edges has nice stochastic properties making the analysis feasible, see   \cite{RemcoNotes2, RemcoStFlour}. In this context, the weights  may describe 
the time it takes for a disease or rumour to spread or the cost for the transmission of a message along an edge. 

In the analysis  of first passage percolation on complex networks such as the configuration model one chooses on a large graph independently two vertices uniformly at random, say a recipient and sender,  and asks for properties of the minimal weight path, the geodesic, connecting the two vertices. In recent years, significant  progress has been made  for a variety of random graph models, see e.g.~\cite{RemcoNotes2, RemcoStFlour} for surveys.
The distance of minimal weight paths and the corresponding number of hops  
are analysed in the 
Erd\"os-Renyi graph first in the dense~\cite{Hofstad2001, Bhamidi2008}, but also in the sparse case~\cite{bhamidi2011}. For the configuration model these questions were first answered for the easier case of exponential weights~\cite{BHH10, BHH_extreme_10}, where the memory-less property enables an approximation of the local structure by a Markovian branching process. It turns out that the behaviour observed here is universal. For a wide class of distribution of weights, the same scaling  of distances leads to convergence, where however the limiting law depends on the weight distribution, see~\cite{BHH12}.
An extension of these techniques allows to answer more complicated questions about the geometry induced by the shortest path: In~\cite{BGHK15} the authors consider  the shortest-path tree obtained when starting from a single vertex and always following shortest edge weights. Then, the authors give an explicit description of the law of the degree of a uniformly chosen vertex in the shortest-path tree.

Assuming that the distribution generating the weights has no atoms, there is, almost surely, a unique minimal weight  path (geodesic) connecting recipient and sender provided  that both are in the same component. Our interest lies in the stochastic interplay between the local neighbourhood around the recipient and the geodesic connecting recipient and sender.
Our research is intimately related to the following statistical question. Given the local neighbourhood around the recipient, how likely is it that a rumour will spread to the recipient along a particular path in the local neighbourhood?
More formally, we encode the information of the geodesic by colouring all edges passed by the geodesic in red and we derive  local limit theorems (in the spirit of Benjamini-Schramm, see~\cite{BS01} and~\cite{AldousSteele}) for the coloured, in the recipient rooted random graphs.

\subsection{Local convergence of coloured graphs}\label{ssn:local_conv}

In this section we review some of the basic graph theoretic notation and 
construct the coloured graph that describes the local geometry of geodesics
induced by first passage percolation. Finally, we define a notion of local convergence
adapted to coloured graphs.

 Let us fix the notation.   In the following the identifiers $G,G_1,\dots$ refer to locally finite, nonempty random or deterministic, weighted multigraphs, briefly called \emph{standard graphs}. The corresponding sets of vertices will be denoted by $V,V_1,\dots$, the sets of edges by $E,E_1,\dots$ and the  weights  by $\wei,\wei_1,\dots$.   There are two natural notions of distance on a standard graph $G$:  the first one is the \hopcount distance, which counts the number of edges on the shortest path between two vertices. However,  we will be mostly interested in the distance
 $d_G$  induced by the weights $\wei$, which is defined for two vertices $v_1,v_2\in V$ by setting
$$
d_G(v_1,v_2)=\inf\Bigl\{\sum_{k=1}^n  \wei(e_k): n\in \N \text{ and } (e_1,\dots,e_n)\text{ path joining }v_1\text{ and }v_2\Bigr\}.
$$

A sequence $(V_1,E_1), (V_2,E_2),\dots$ of finite random multigraphs will be called \emph{network model} and when speaking of \emph{first-passage percolation} we will always assume that $G_1,G_2,\dots$ refer to the standard graphs obtained from $(V_1,E_1),(V_2,E_2),\dots$ by independently assigning  i.i.d.\ weights to the individual edges. The distribution generating the weights will not depend on the graph and  will be denoted by $\mu$.  We will assume that $\mu$ has no atoms so that, almost surely, each pair of vertices has at most one geodesic connecting it. We stress that for all our statements probabilistic  dependencies between the individual graphs $G_1,G_2,\dots$ are irrelevant.

A standard tool in the analysis of network models is the derivation of local limit theorems in the sense of Benjamini-Schramm \cite{BS01}. The concept plays an analogous role to Palm measures for stationary point processes. It 
 describes a weak limit theorem for the graph  centered at an independent uniformly chosen vertex. We extend this concept by marking the geodesic connecting the latter vertex to another independent uniformly chosen vertex in red.

\begin{defi}Let $G$ be a  finite standard graph. 
\begin{enumerate}\item We call a random rooted graph  $(G,o)$ with $o$ being a random vertex such that given $V$, $o$ is uniformly distributed on $V$, a \emph{neighbourhood for $G$}. 
 \item Let  $(G,o)$ be a \emph{neighbourhood for $G$} and $u$ be a random vertex such that given $(G,o)$, $u$ is uniformly distributed on $V$. Then the random rooted coloured graph $\cG=(G,o,c)$ with  $c$ being a random mapping (colouring) $c:E\to\{0,1\}$ such that for an edge $e\in E$
 $$
 c(e)=1 \ \Leftrightarrow \ e\text{ lies on a $d_G$-geodesic connecting $o$ and $u$}
 $$
 is called a  \emph{geodesic neighbourhood for $G$}. 
\item For $R\in\N_0$ and a random rooted and  coloured  standard graph $\cG=(G,o,c)$ we call the rooted and coloured  standard graph $\cG|_{\leq R}$  obtained from $\cG$ by removing all vertices with \hopcount distance strictly bigger than $R$ to $o$ together with the attached edges the \emph{$R$-truncation of $\cG$}.
 Analogously, we define the $R$-truncation $(V,E,o)|_{\leq R}$ of any rooted graph $(V,E,o)$ and denote by 
$V|_{\leq R}$ and $E|_{\leq R}$ the set of vertices and edges of the $R$-truncation.
\end{enumerate}\end{defi}

Note that formally a coloured local neighbourhood $(G,o,c)$ for $G$ is characterized by the conditional distribution of $(o,c)$ given $G$.
Generally, we refer to the edges $e\in E$ with $c(e)=1$ as \emph{red edges} and to the remaining ones as \emph{black edges}. Although $u$ is not part of the definition of a coloured graph we will always assume it to be defined as above in our considerations.

 We introduce a topology on the space of random rooted and coloured standard graphs.

\begin{defi}  A sequence of random rooted and coloured standard graphs  $(\cG_n)_{n\in\N}$ converges \emph{locally} to a random rooted and  coloured standard graph $\cG$, if for every $R\in\N$ as $n\to\infty$
\[ d_{\rm TV} (\cG_n|_{\leq R} , \cG|_{\leq R}) := 
\inf  \p ( \cG_n|_{\leq R} \not\sim \cG|_{\leq R} ) \ra 0 , \]
where the infimum is taken over all couplings of $\cG_n|_{\leq R}$ and $\cG|_{\leq R}$, and $\cG_n|_{\leq R} \sim \cG|_{\leq R}$ means that there is an isomorphism that preserves 
the underlying graph structure including the roots, the colours and the weights. 
\end{defi}

\begin{rem}
In the previous definition we use the concept of convergence in total variation distance rather than weak convergence. Therefore the topology restricted to the weighted graphs is stronger than its analogue introduced in~\cite{AldousSteele}. However the topology restricted to the multigraph (without weights and colouring)  is just classical Benjamini-Schramm convergence.
\end{rem}

\subsection{Main results}\label{ssn:main_results}

We consider the configuration model. 
Let $V$ be a finite set and $d:V\to \N_0$ a mapping  such that
\[ \ell := \sum_{i=1}^n d_i ,\]
is even. We generate a random multigraph graph $(V,E)$ by 
\begin{itemize} \item taking a random uniform pairing $\mathbb H$ of the set 
$$
\{(v,j):v\in V, j=1,\dots, d_j\}
$$
and 
\item  interpreting each unordered pair $\langle (v,j), (v',j')\rangle $ of $\mathbb H$  as an  undirected edge~$\langle v,v'\rangle$.
\end{itemize}
A random graph with the corresponding distribution is called \emph{$(V,d)$-configuration graph}. It is also possible to first choose the parameters $V$ and $d$ at random and then generate the random graph according to the above rule. In that case we call the graph a random configuration model. The resulting multigraph has very few self-loops
and multiple edges, 
see~\cite[Chapter 7]{RemcoNotes1} for more details.

\begin{defi}Let $\cD$ be an integrable distribution on $\N_0$.
A sequence of random configuration models $(V_n,E_n)_{n\in\N}$ with $\#V_n=n$ such that
$$\cD_n=\frac1n\sum_{i\in V_n} \delta_{d_i^{(n)}}\Rightarrow \cD\text{, \  in probability,}$$
and the mean of $\cD_n$ converges in probability to the mean of $\cD$,
is called \emph{configuration network with asymptotic degree distribution $\cD$}. If for a $\cD$-distributed random variable~$D$
\begin{equation}\label{eq:mean}
\E[D(D-1)]/\E[D]>1,
\end{equation}
then we call the network \emph{supercritical}. 
Further, if  the family $(D_n^2 \log D_n)_{n\in\N}$ is uniformly integrable, where $D_n$ denotes the degree of a uniformly chosen vertex of $G_n$, then we call the configuration model $(G_n:n\in\N)$ \emph{regular}.
\end{defi}

A supercritical configuration network  has a giant component in the sense  that if
$C_{\rm max}^\ssup{n}$ denotes the largest component in the configuration model $(V_n, E_n)$, then 
\[ |C_{\rm max}^\ssup{n}|/n \ra \zeta  , \quad\mbox{ in probability},\]
where $\zeta \in (0,1]$ is the survival probability of a suitable branching process that we describe next, see~\cite[Thm.~4.1]{RemcoNotes2}.

For first-passage percolation on a configuration network with asymptotic degree distribution $\cD$ it is well known that the local  neighbourhood   converges locally to a (modified) Galton-Watson process $\gw$  with independent $\mu$-distributed weights.
To introduce the limiting process $\gw$ we denote by $D$ a $\cD$-distributed random variable and by $D^*$ its size-biased counterpart, i.e.\
\[ \p (D^* = k ) = \frac{k\, \p ( D =  k ) }{\E [D]} ,\quad k \in \N_0 .  \]
We exclude the degenerate case where $D=0$, almost surely.
The limit $\gw$ is a rooted weighted  tree $T=(V,E,\wei, o)$ that can be generated as follows.  The root $o$  has a random number of children with the same distribution as $D$. 
Each of its descendants has an independent number of offspring with the same distribution as $D^*-1$.
We interpret the branching process as a graph by drawing an edge between each individual and its offspring and we assign to each edge $e$ an independent $\mu$-distributed weight~$\wei_e$.

So far we described the classical Benjamini-Schramm limit (including the weights).
It remains to introduce the colouring.
We distinguish two regimes:

\begin{tabularx}{\textwidth}{ p{1.2cm} >{\raggedright\arraybackslash}X}%
{\bf (EXP) } & \emph{Explosion}: The branching process $T$ explodes in finite time with strictly positive probability, meaning that the probability that the explosion time
 \[ \tau:=\inf \{t\geq 0: \#\{v: d_w(o,v)<t\} =\infty\}  \]
 is finite is strictly positive.\\
{\bf (MG) } & \emph{Malthusian growth}: The configuration network is 
 supercritical and regular. 
\end{tabularx}

{\bf Limiting object in the case with explosion}.

First-passage percolation on a Galton-Watson process is said to explode, if the explosion time
 \[ \tau:=\inf \{t\geq 0: \#\{v: d_w(o,v)<t\} =\infty\}  \]
is finite with strictly positive probability.
For  some of the properties of branching processes see~\cite{komjathy_explosion} and 
for an characterization of offspring and weight distributions that lead to explosion see~\cite{Amini13}.

In the case of explosion one has 
$$
\P(T \text{ is finite or explodes})=1.
$$
Further, conditionally on the event $\{T \text{ explodes}\}$, there exists a unique random infinite geodesic ray 
$\xi = (\xi_0 = o, \xi_1, \xi_2, \ldots )$ (path to explosion), i.e.\ a semi-infinite geodesic  path in the tree, such that
$$
\tau=
\lim_{n\to\infty} d_T(o,\xi_n).
$$

We colour the graph $T$ as follows. We independently toss a coin with success probability given by the 
survival probability of $\gw$. If successful and if the tree $\gw$ is infinite,  we colour all edges $(\xi_{j-1},\xi_j)$ on the path to explosion in red and the remaining edges in black.
If the coin toss is unsuccessful or $T$ is finite, 
we colour all edges black.
In all cases, we denote the induced 
rooted coloured tree by $\cT$.
The case that all edges are black corresponds to the case  where the source or receiving vertex
is not in the giant component and there is no geodesic in the graph.

{\bf Limiting object in the case with Malthusian growth.}

We now assume that the configuration model is supercritical and regular.
By supercriticality, one has $\E[D^\star-1]>1$ and there exists a unique $\lambda>0$ that solves the equation
$$
\E[D^\star-1] \int_{(0,\infty)} e^{-\lambda x} \, \mu(dx )=1,
$$
the \emph{Malthusian parameter}.
In this case we can equip each vertex $v$ of the tree $T$
with the martingale limit
\begin{equation}\label{eq:mg_limit}
M(v):= \lim_{n\to \infty} 
\sumtwo{v< v'}{|v'| = n}
e^{-\lambda d_w(v,v')}, 
\end{equation}
where, we denote by $|u|$ the \hopcount distance to the root 
and we write $u < v$ if $u$ is an ancestor of $v$ (i.e.\ $u$ is on the shortest path in the \hopcount distance from $v$ to $o$).

Then, we have the consistency relation that for every vertex $v$ and $n \in \N$ with $|v|\leq n$
$$
M(v)=\sumtwo{|v'| = n}{v\leq v'} e^{-\la d_w(v,v') } M(v') .
$$
For later reference we denote by $M^*$ a random variable that is identically distributed as $M(v)$
for $|v| =1$ (conditionally on $\{ v \in V \, : \, | v|  =1 \} \neq \emptyset$).
Due to  the regularity assumption $\E[D^2 \log_+ D] < \infty$ (equivalently $\E [ D^\star \log_+D^\star] < \infty$) by~\cite{JagersNerman1984} 
 one has,  up to nullsets
$$
\#\gw=\infty \ \Leftrightarrow \ M(o)>0 . 
$$
Given the tree $\gw$, 
we first independently toss a coin with success 
probability given by the survival probability of $\gw$. 
If successful and if the tree is infinite, then
we colour a unique ray starting from the root in red with conditional distribution 
\begin{align}\label{eq734}
\P(v\text{ on red ray} \, |\, T)= \frac {e^{-\lambda d_w(\rho,v)}M(v)}{M(\rho)}= \frac {e^{-\lambda d_w(\rho,v)}M(v)}{\sum_{v':|v'|=|v|}e^{-\lambda d_w(\rho,v')} M(v')}
\end{align}
for every vertex $v$ of the tree $\gw$.
If the coin toss is unsuccessful or if
 the tree is finite, we colour all  edges  black. 
We denote by $\cT$ the corresponding rooted coloured tree. 

As before, the case that all edges are black corresponds to the case where the source or receiving vertex
are not in the giant component and there is no geodesic in the graph.

Now, we are finally ready to state our main theorem. 

\begin{thm}\label{thm:local_conv} Let $(G_n:n\in\N)$ be first-passage percolation on the configuration network with 
asymptotic degree distribution $\cD$.  
 If either the branching process $T$  explodes or the configuration network is  supercritical and regular, then the corresponding 
 geodesic neighbourhoods
 $(\cG_n:n\in\N)$ converge 
locally to the rooted coloured  tree  $\cT$  introduced  above.
\end{thm}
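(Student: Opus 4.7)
The plan rests on three ingredients: Benjamini--Schramm convergence of the underlying weighted graph, local tree-likeness of the graph-distance ball of radius $R$ around~$o$, and an identification of the ``exit vertex'' through which the $o$-to-$u$ geodesic leaves that ball. By the standard local weak convergence of the configuration model, the ball $B_R(o) := \{v : |v| \le R\}$ in $G_n$ (using graph distance) can be coupled to $T|_{\leq R}$ so that with probability tending to $1$ they agree as rooted weighted multigraphs; in particular $B_R(o)$ is a tree. On this coupling event, the geodesic from $o$ to any $u \notin B_R(o)$ intersects $B_R(o)$ in exactly the unique tree path from $o$ to some boundary vertex $v^{*} \in \partial B_R(o) = \{v : |v| = R\}$. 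Hence $\cG_n|_{\le R}$ is determined by $T|_{\le R}$ and by $v^{*}$, and the task reduces to finding the limiting conditional law of $v^{*}$ given $T|_{\le R}$. (The empty colouring corresponds to $o$ or $u$ lying outside the giant component, which is handled by matching giant-component probabilities to the survival probability of $T$.)

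In the Malthusian regime, the subtrees of $T$ hanging off different $v \in \partial B_R(o)$ are asymptotically independent copies of $T$, each carrying its own martingale limit $M(v)$. Invoking the first-passage-percolation asymptotics \`a la Bhamidi--van der Hofstad--Hooghiemstra \cite{BHH10, BHH12}, the distance from $v$ to a uniform target $u$ admits an expansion of the form
$$
\lambda \, d_{G_n}(v, u) \;-\; \log n \;=\; -\log M(v) \;+\; c \;+\; \Gamma_v \;+\; \Xi_u \;+\; o_{\mathbb{P}}(1),
$$
where $c$ is a deterministic constant, $\Xi_u$ bundles all $v$-independent randomness coming from the target side, and the ``local'' Gumbel terms $(\Gamma_v)_{v \in \partial B_R(o)}$ are asymptotically i.i.d.\ and independent of $T|_{\le R}$. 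Since $v^{*}$ minimises $d_{G_n}(o,v) + d_{G_n}(v,u)$ over $v \in \partial B_R(o)$, the $n$- and $\Xi_u$-dependent terms cancel, reducing the problem to minimising $\lambda d_T(o,v) - \log M(v) + \Gamma_v$. By the classical extremal calculus for i.i.d.\ Gumbels, the arg-min is distributed proportionally to $e^{-\lambda d_T(o,v)} M(v)$, exactly matching formula~\eqref{eq734}.

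In the explosive regime the analysis is simpler: conditionally on the subtree at $v$ being infinite, $d_{G_n}(v,u)$ converges in probability to $\tau_v$, the explosion time of the subtree rooted at $v$, plus a universal additive constant. The almost sure uniqueness of the minimiser of $d_T(o,v) + \tau_v$ across $v \in \partial B_R(o)$ then forces $v^{*} = \xi_R$, the vertex on the explosion ray at graph distance $R$ from $o$, matching the construction of $\cT$.

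\textbf{Main obstacle.} The delicate point is the Malthusian step: the joint Gumbel asymptotic above must hold \emph{simultaneously} over $v \in \partial B_R(o)$, not just marginally. This requires running forward explorations from every boundary vertex of $B_R(o)$ up to a macroscopic size, showing that they remain effectively disjoint on the relevant timescale (so that their martingale limits and first-hitting fluctuations are asymptotically independent of each other and of $T|_{\le R}$), and that the first collision with the uniform target $u$ produces the claimed Gumbel fluctuation with shift $-\tfrac{1}{\lambda}\log M(v)$. The natural tools are second-moment estimates on supercritical branching-process growth combined with the coupling techniques from \cite{BHH10, BHH12, BGHK15}, but converting joint convergence in distribution into the required total-variation bound on $\cG_n|_{\le R}$ is where most of the work will lie.
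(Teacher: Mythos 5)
Your plan splits naturally into the Malthusian and explosive halves, and the assessment is quite different for the two.

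\textbf{Malthusian case: essentially the paper's argument.}
Your decomposition of the neighbourhood into the $R$-ball $T|_{\le R}$ and the exit vertex $v^{*}$, followed by the joint asymptotic
$\lambda d_{G_n}(v,u)-\log n \approx -\log M(v)+c+\Gamma_v+\Xi_u$
and the competing-Gumbel (equivalently, competing-exponential) calculation for the arg-min, is exactly what the paper does. The point you flag as the ``main obstacle'' --- joint convergence over all $v\in\partial B_R(o)$, with the $M(v)$ and the Gumbel terms asymptotically independent of each other and of $T|_{\le R}$ --- is not an open step: it is precisely what Proposition 3.2 of \cite{BGHK15} supplies, and the paper invokes it directly as Proposition~\ref{prop:malthusian_dist}. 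The paper still needs some care around the zero-set of the martingale limits (to match the probability that the colouring is empty with $(1-\zeta)+\zeta(1-\zeta^*)^K$), via a separate estimate, so the total-variation upgrade you worry about is not automatic; but the overall route is the same.

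\textbf{Explosive case: genuinely different route, and your version has a gap.}
You propose to run the same distance-asymptotics argument and say that ``conditionally on the subtree at $v$ being infinite, $d_{G_n}(v,u)$ converges in probability to $\tau_v$, the explosion time of the subtree rooted at $v$, plus a universal additive constant.'' This is not correct as stated: $d_{G_n}(v,u)$ does not converge in probability to anything --- it converges in \emph{distribution}, and the limit has the form $\tau_v+\tau_u'$ where $\tau_u'$ is an independent explosion-type variable coming from the target side, not a deterministic constant. (The conclusion you draw is salvageable, since $\tau_u'$ does not depend on $v$ and drops out of the arg-min; but the statement itself would not survive referee scrutiny.) More importantly, to run this argument you would need a \emph{joint} distributional limit of $(d_{G_n}(v,u_n))_{v\in\partial B_R(o)}$ in the explosive regime, with the $\tau_v$ in the limit identified with the explosion times of the subtrees of the coupled tree $T$. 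This is not in \cite{BGHK15} (which is the Malthusian result), and the paper's hypothesis (EXP) is weaker than the conditions under which such joint explosive asymptotics are established elsewhere. This is exactly why the paper does something else: it explores the ball around $o_n$ for $N$ steps, shows via Lemmas~\ref{le:gw1} and~\ref{le:eps} that most half-edges on the boundary of the explored region lie in the branch of $\xi_R$, and then uses the exchangeability of the remaining configuration-model pairing (a half-edge rewiring argument) to conclude that the geodesic from $u_n$ enters the explored ball through a uniformly random boundary half-edge, hence with high probability through the explosion branch. That argument needs no distance asymptotics at all and works under (EXP) alone. Your route would work if you could supply the joint explosive distance asymptotics as a black box, but without that, the explosive half of your plan does not close.
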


The remaining paper is structured as follows. 
In Section~\ref{sec:proof_no_explosion}, we prove Theorem~\ref{thm:local_conv}
in the case without explosion.
In Section~\ref{sec:explosion}, we prove Theorem~\ref{thm:local_conv}
in the case with explosion.

\section{The case without explosion}\label{sec:proof_no_explosion}

We use the following proposition of  \cite[Proposition 3.2]{BGHK15} (considerably relying on \cite{BHH12}), which describes the distances between a uniformly chosen source vertex $V$ and 
a finite number of other (target) vertices.

\begin{prop}\label{prop:malthusian_dist} Let $(G_n:n\in\N)$  be first-passage percolation on a regular supercritical  configuration model with asymptotic degree distribution $\cD$. 
Let  $v$ be a uniformly chosen vertex and further let $v_1,\dots,v_k$ denote distinct vertices for which the degrees $(d_{v_1},\dots,d_{v_k})$ converge jointly in distribution to independent copies of $D^\star-1$ and that are independent of the pairing of the half-edges. Then there exists a random sequence  $(\lambda_n)$ converging to $\la$ in probability such that
$$
(\lambda_n d_{G_n}(v,v_i) - \log n)_{i=1,\dots,k} \Rightarrow    (\log  \cE_i/W_i+ \log 1/\hat W+c)_{i=1,\dots,k},
$$
where $\cE_1,\dots,\cE_k,W_1,\dots,W_k,\hat W$ are independent random variables  with
$$
\cE_i\sim \mathrm{Exp}(1), \ W_i \stackrel d= M^*,\text{ and } \hat W \stackrel d= M(o), 
$$
 and $c$ is an explicit constant depending on $\mu$ and $\cD$.
Moreover, the probability that $\hat W = 0$ (resp.\ $W_i = 0$) corresponds to the limiting probability of $V$ (resp.\ $V_i$) not being in the largest component.
\end{prop}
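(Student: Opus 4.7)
The plan is to realize $d_{G_n}(v,v_i)$ as (asymptotically) the sum of a ``half-way'' exploration time from $v$ plus the first time at which the exploration from $v_i$ is pinned onto $v$'s ball via the uniform half-edge pairing. Three ingredients then combine to produce the claimed limit law: (i) Crump--Mode--Jagers (CMJ) asymptotics for the growth of the weighted balls, supplying the martingale limits $\hat W$ and $W_i$; (ii) a Poisson meeting calculation, supplying the exponentials $\cE_i$; (iii) estimation of the Malthusian rate $\lambda$ from the empirical weight and degree data, yielding the random slope $\lambda_n$, the $\log n$ centering, and the constant $c$.

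\textbf{Coupling with CMJ trees.} For each $u\in\{v,v_1,\ldots,v_k\}$, I would reveal the half-edge pairings in order of increasing weighted distance from $u$. A standard configuration-model coupling matches each local exploration, up to a stopping time at which the total number of explored half-edges is $O(n^{1/2-\delta})$, to an independent weighted Galton--Watson tree whose root has degree $D$ (for $u=v$) or $d_{v_i}$ (for $u=v_i$), and whose every non-root individual has $D^\star-1$ offspring, with i.i.d.\ $\mu$-distributed edge weights; these are precisely the CMJ trees with Malthusian parameter $\lambda$ described in the paper. The regularity hypothesis $\E[D^2 \log_+ D]<\infty$ is exactly what is needed for Nerman's theorem to apply, giving
\[
e^{-\lambda t}\,\#\{w : d_\wei(u,w)\leq t\}\;\ra\; c_1\, W_u \qquad \text{a.s.,}
\]
with $W_v\stackrel{d}{=}M(o)=\hat W$ and $W_{v_i}\stackrel{d}{=}M^\star=W_i$. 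The event $\{W_u=0\}$ coincides a.s.\ with extinction of the corresponding CMJ tree, which in the prelimit matches $\{u\notin C_{\rm max}^\ssup{n}\}$.

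\textbf{Meeting times.} Next, I would freeze the $v$-exploration once its number of free frontier half-edges is $\Theta(\sqrt n)$, which by the growth above occurs at time $t_n^\ssup{0}=\tfrac{1}{2\lambda}\log n-\tfrac{1}{\lambda}\log\hat W+O(1)$, and then grow each $v_i$-exploration independently. A newly revealed half-edge is matched uniformly with the $\ell - o(n)$ remaining free half-edges, so it hits the $v$-ball at instantaneous rate $H_v/(\ell-o(n))$ where $H_v$ denotes the number of free half-edges inside $v$'s ball. Plugging in $H_{v_i}(s)\sim c_2 W_i e^{\lambda s}$ and performing the standard Poisson time-change, the additional time until $v_i$'s ball first touches $v$'s ball equals
\[
t_n^\ssup{i}=\tfrac{1}{2\lambda}\log n-\tfrac{1}{\lambda}\log W_i+\tfrac{1}{\lambda}\log \cE_i + O(1),
\]
with $\cE_i\sim\mathrm{Exp}(1)$ and, by disjointness of the half-edges involved, jointly independent across $i$. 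Summing $t_n^\ssup{0}+t_n^\ssup{i}$ and multiplying by $\lambda$ produces
\[
\lambda d_{G_n}(v,v_i)-\log n \;\ra\; \log(\cE_i/W_i)+\log(1/\hat W)+c,
\]
with $c=c(\mu,\cD)$ collecting the lower-order constants; a random $\lambda_n\to\lambda$ absorbs sub-logarithmic fluctuations from estimating $\int e^{-\lambda x}\mu(\dd x)$ via the empirical degree and weight statistics.

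\textbf{Main obstacle.} The delicate part is the joint statement: one must simultaneously (a) preserve the coupling with $k+1$ independent CMJ trees beyond the $\sqrt n$ scale that the meeting calculation requires, (b) rule out spurious ``wrong'' matches, either between two $v_i$-balls or back into the same ball, on the relevant time scale, and (c) show that the meeting clocks $\cE_1,\ldots,\cE_k$ decouple asymptotically. A clean route is a sequential exploration: first grow $v$ until $t_n^\ssup{0}$, then $v_1,v_2,\ldots$ in turn until each first connects to $v$'s ball, using at each stage that after removing the revealed half-edges the residual pairing on the remaining half-edges is still uniform. Quantifying the errors in (a)--(c) and pushing them through the $\log$ requires the moment and tail estimates developed in \cite{BHH12}, and this is the main technical burden; with those in hand the conclusion as in \cite{BGHK15} follows.
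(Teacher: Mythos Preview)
The paper does not prove this proposition at all: it is quoted verbatim as \cite[Proposition~3.2]{BGHK15} (which in turn relies on \cite{BHH12}) and used as a black box in the proof of Theorem~\ref{thm:local_conv}. So there is no ``paper's own proof'' to compare against.

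Your sketch is, in outline, the argument carried out in \cite{BHH12,BGHK15}: couple the local explorations with independent CMJ trees, invoke Nerman's theorem under the $x\log x$ condition to obtain the martingale limits, and run a Poisson meeting calculation on the frontier half-edges to produce the exponential variables and the $\log n$ centering. The sequential exploration you describe (grow from $v$ first, then from each $v_i$ in turn, using that the residual pairing remains uniform) is exactly how \cite{BGHK15} upgrades the two-point result of \cite{BHH12} to $k$ targets. You also correctly identify the main technical burden as items (a)--(c): maintaining the tree coupling past the $\sqrt n$ scale, excluding collisions between the $v_i$-balls or self-collisions, and decoupling the meeting clocks. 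These are precisely the estimates that occupy most of \cite{BHH12}. One small inaccuracy: the random $\lambda_n$ in the cited result is not obtained by ``estimating $\int e^{-\lambda x}\mu(\dd x)$ via the empirical degree and weight statistics''; rather, it arises because the distances are compared to $\lambda_n^{-1}\log n$ with $\lambda_n$ the Malthusian parameter of an auxiliary truncated process (or equivalently a data-dependent renormalisation absorbing the second-order fluctuations of the CMJ growth), and only its convergence in probability to $\lambda$ is asserted. Apart from that, your proposal is a faithful high-level summary of the cited proof.
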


\begin{proof}[Proof of Theorem~\ref{thm:local_conv} in the regular case]
Without loss of generality we assume that the set of vertices of $G_n$ is equal to $\{1,\dots,n\}$ and that the distribution of $G_n$  is invariant under every permutation of  the labels (e.g.\ by randomly permuting the labels). In particular, this guarantees that we cannot infer information about the degree of individual vertices by knowing their label.

Let $o_n$ and $u_n$ be  independent uniformly chosen vertices from $G_n$ and  
fix $R\in\N$. 
 We write $G^*_n$ for the graph obtained from $G_n$ when removing all edges of  $E_n|_{\leq R}$.

By the standard coupling construction, see e.g.~\cite[Chapter 5.2.1]{RemcoNotes2}, there exists a coupling of $(G_n,o_n)$ and $\gw$  such that for the coupled random variables and   $\Om_n=\{(G_n,o_n)|_{\leq R}\sim T|_{\leq R}\}$, $\lim_{n\to\infty}\P(\Om_n)= 1$ and on $\Om_n$ given $(G_n,o_n)|_{\leq R}$ and $T|_{\leq R}$
\begin{itemize}
	\item $G_n^*$ is again a configuration model and
	\item 
	the rooted and weighted tree $\gw$ is generated by growing 
	 from each vertex $v$ with $|v|=R$  an independent Galton-Watson process  $T(v)$ with offspring distribution $D^\star -1$ and attaching independent weights to all edges.
\end{itemize}

Let $\mathfrak t$ be a finite weighted, rooted tree (coded using the Ulam-Harris notation) with depth at most $R$. Denote by $v_1, \ldots, v_K$ the vertices in $\ft$ with \hopcount distance $R$ to the root and set $v_0 = o$.
We continue under the regular probability distribution $\P_n^\mathfrak{t}=\P(\,\cdot\, | \,G_n|_{\leq R}\sim T=\mathfrak t)$. We denote by $\phi_n$ a random $G_n|_{\leq R}$-measurable isomorphism between the tree $\mathfrak t$ and $G_n|_{\leq R}$ and set $v_k^\ssup{n}=\phi_n(v_k)$ for $k=0,1,\dots,K$.
Under~$\P_n^\mathfrak {t}$,  $(G_{n}^*:n\in\N)$ is a configuration network with asymptotic degree distribution~$\cD$. 
As is well known (see for instance~\cite[Chapter 5.2]{RemcoNotes2}) the degrees of the vertices $v_1^\ssup{n},\dots, v_K^\ssup{n}$ are in $G_n^*$ asymptotically independent and distributed as $D^\star-1$  
 and we can apply Proposition~\ref{prop:malthusian_dist} and obtain that
\begin{equation}\label{eq:compl_dist}
Z_n:=(d_{G_n^*}(u_n,v_k^{\ssup n})- \lambda_n^{-1}\log n)_{k=1,\dots,K} \Rightarrow    \lambda^{-1} (\log  \cE_k/W_k+ \log 1/\hat W+c)_{i=1,\dots,k}, 
\end{equation}
where $\cE_1,\dots,\cE_K, W_1,\dots,W_K,\hat W$ are independent random variables with $\cE_1,\dots,\cE_K\sim\mathrm{Exp}(1)$ and $W_1,\dots,W_K,\hat W\sim M^*$  and $c$ is as in the proposition. Note that the left hand side of~(\ref{eq:compl_dist}) depends only on $G_n^*$ and $u_n$. Further 
under each~$\P_n^\mathfrak {t}$ the vector $(M(v_1,),\dots,M(v_k))$ is identically distributed to $(W_1,\dots,W_k)$ and thus  applying a Skorokhod coupling we can couple $(G_n^*,u_n)$ and $T$ such that for the coupled random variables, say under $\P^\mathfrak {t}$, 
$$
Z_n\to \lambda ^{-1}(\log  \cE_k/M(v_k)+ \log 1/\hat W+c)_{i=1,\dots,k}=:Z, \ \text{ $\P^\mathfrak {t}$-a.s.}
$$ 
with independent random variables $\cE_1,\dots,\cE_K, \hat W$ (also of $T$) satisfying $\cE_1,\dots,\cE_K\sim\mathrm{Exp}(1)$ and $\hat W\sim M^*$.

We continue arguing with high probability, under the measure $\P^\mathfrak{t}$.
 With high probability,  $u_n$ is not in $V_n|_{<R}$ and
$$
d_{G_n} (o_n,u_n)= \min_{k=1,\dots,K} d_{G_n|_{\leq R}} (o_n,v_k^\ssup{n})+ d_{G_n^*} (v_k^\ssup{n},u_n).
$$
If the latter distance is finite then the corresponding geodesic passes at \hopcount distance $R$ the (a.s.) unique vertex $v_k^{\ssup n}$ for which $d_{G_n|_{\leq R}} (o_n,v_k^\ssup{n})+ d_{G_n^*} (v_k^\ssup{n},u_n)$
 or, equivalently, 
\begin{align}\label{eq93457}
e^{\lambda d_{T}(o,v_k)} \exp\{ \lambda( d_{G_n^*}(v_k^\ssup{n}, u_n)-\lambda _n^{-1} \log n)-c\}
\end{align} is minimal. In that case we denote the respective $k$ by $k_\mathrm{min}^\ssup{n}$ and set $k_\mathrm{min}^\ssup{n}=0$ in the case that  $u_n$ is not connected to $\{v_1^\ssup{n},\dots,v_K^\ssup{n}\}$ in $G_n^*$. 
By construction, (\ref{eq93457})  converges, almost surely, to 
$$
e^{\lambda d_{T}(o,v_k)} \frac {\cE_k}{M(v_k) \hat W}.
$$
We distinguish two cases. If $\hat W$ and at least one of the $M(v_k)$ is strictly positive we denote by $k_\mathrm{min}$ the unique minimizer of the previous term. Otherwise we set $k_{\mathrm{min}}=0$. The distribution of $M(v_k)$ has no atom outside zero so that in the former case the minimizer $k_\mathrm{min}$ is almost surely unique and we have $k^\ssup{n}_\mathrm{min}\to k_\mathrm{min}$, up to nullsets. It remains to show that also in the latter case $k^\ssup{n}_\mathrm{min}\to k_\mathrm{min}$ which follows immediately once we show that
\begin{align}\label{eq94357}
\liminf_{n\to\infty} \P^\mathfrak{t}( k^\ssup{n}_\mathrm{min}=0)\geq  \P^\mathfrak{t}( k_\mathrm{min}=0).
\end{align}
Note that up to nullsets the event $k_\mathrm{min}=0$ agrees with the event 
$$\{\hat W=0\} \cup \{ T(v_1),\dots,T(v_K)\text{ are finite}\}$$ so that the right hand side of~(\ref{eq94357}) satisfies 
$$
\P^\ft(k_\mathrm{min}=0)= (1-\zeta) + \zeta(1-\zeta^*)^{K},
$$
where we recall that $\zeta$ is the survival probability of $T$ and 
$\zeta^*$ the survival probability of a Galton-Watson process with offspring distribution $D^\star -1$.

Conversely, 
\[\begin{aligned}  \p^\ft ( k_{\rm min}^\ssup{n} = 0) & = \p^\ft( u_n \not\leftrightarrow v_i \mbox{ for all } i = 1, \ldots, K ) \\
& = \p^\ft ( v_i \notin C^\ssup{n, *}_{\rm max} \ \forall i = 1, \ldots, K, u_n \in  C^\ssup{n, *}_{\rm max} ) \\
& \qquad + \p^\ft( u_n \not\leftrightarrow v_i \ \forall  i = 1, \ldots, K , u_n \notin C^\ssup{n, *}_{\rm max} ) ,
\end{aligned} \] 
where $C^\ssup{n, *}_{\rm max}$ denotes the largest component in $G_n^*$.
Now the first summand satisfies
\[\p^\ft ( v_i \notin C^\ssup{n, *}_{\rm max} \ \forall i = 1, \ldots, K, u_n \in  C^\ssup{n, *}_{\rm max} ) \ra \zeta ( 1- \zeta^*)^K, 
\]
while the second one satisfies
\[\begin{aligned}  \p^\ft( u_n \not\leftrightarrow & v_i \ \forall  i = 1, \ldots, K , u_n \notin C^\ssup{n, *}_{\rm max} ) \\
 & = \p ( u_n \notin C^\ssup{n, *}_{\rm max} )  - 
\p ( u_n \notin C^\ssup{n, *}_{\rm max} , \exists i =1,\ldots,K\, : \, 
u_n \leftrightarrow v_i \ ) \\
& \ra (1-\zeta) . \end{aligned} \]
Combining these statements yields~\eqref{eq94357}.

Thus we showed that for $k\in\{1,\dots,K\}$
\begin{align*}
\P^\mathfrak {t}(o_n \leftrightarrow v_k^\ssup{n}\text{ is red in }\cG_n) &\to\P^\mathfrak{t}(k_\mathrm{min}=k)\\
&=\P^\mathfrak{t} (\hat W>0)\,  \E^\mathfrak{t}\Bigl[ \frac { e^{-\lambda d_{T}(o,v_k)} M(v_k)}{\sum_{\ell=1}^K  e^{-\lambda d_{T}(o,v_\ell)} M(v_\ell)  } \1_{\{M(v_k) > 0 \}}\Bigr]\\
&= \P\bigl(o \leftrightarrow v_k\text{ is red in }\cT\,|\, T|_{\leq R}=\mathfrak t\bigr),
\end{align*}
 while
\[ \p^\ft(\cG|_{\leq R} \mbox{ is black}\, ) = \p^\ft(k_{\rm min}^n = 0) 
\ra \p^\ft( k_{\rm min} = 0 ) = \p( \cT \mbox{ is black} \, | \, T|_{\leq R} = \ft) . \]
Integrating out the feasible $\mathfrak t$ and recalling that  $\P(\Om_n)\to1$ finishes the proof.
\end{proof}

\section{The case with explosion}\label{sec:explosion}

In this section we will prove Theorem~\ref{thm:local_conv} in the case that
the underlying branching process $T$ explodes in finite time.
The proof relies on a suitable exploration of the graph $G_n$, which 
we will describe in Section~\ref{ssn:exploration}. 
We will only discover local information about the graph, so that
we can carry out the actual proofs for the approximating branching process, which 
will be in Section~\ref{ssn:galton}. Finally, we complete the proof in Section~\ref{ssn:proof_explosion}
using a coupling argument between the graph and the branching process.

\subsection{The exploration process}\label{ssn:exploration}

We use the concept of an exploration process to collect information about a rooted (random or deterministic) weighted graph $(G,o)$. Later we will choose $G=G_n$ or $G=T$. 

The exploration depends on a parameter $R\in\N$. 
We initialise the exploration with the rooted graph $(G,o)|_{\leq R}$ and call all vertices with \hopcount distance $R$ 
 to the root $o$ active. Further we record for each active vertex its degree in $G$. Formally we describe the initial status of the exploration by the tuple $\cE_0=(G^{(0)},\cA_0, \mathfrak d_0)$ with $G\ssup{0}=  (G,o)|_{\leq R}$, $\cA_0=V|_{= R}$ and $\mathfrak d_0=(\mathrm{deg}_G(v))_{v\in \cA_0}$.
The exploration is now defined inductively. To get for $N\in\N$ from $\cE_{N-1}$ to $\cE_{N}$ we find the vertex $v^\star(N)\in \cA_{N-1}$ that minimizes the distance to the root  $o$ and form $G^\ssup{N}$ by adding all immediate neighbours (including the edges with weights) of $v^\star(N)$ in the graph $G$ to $G^\ssup{N-1}$. The new set of active vertices $\cA_{N}$ is formed by discarding $v^\star(N)$ and adding all vertices that were added to get from  $G^{(N-1)}$ to $G^{(N)}$. Further we let $\mathfrak d_N=(\mathrm{deg}_G(v))_{v\in \cA_N}$. We say that the vertex $v^\star(N)$ is \emph{explored}. 

In the case that $\cA_{N-1}$ is empty we set $v^\star(N)=o,\, G^\ssup{N}=G^\ssup{N-1}$ and $\cA_{N}=\cA_{N-1}=\emptyset$.
 For completeness we resolve ties by choosing one of the minimisers uniformly at random.
 Finally, we set $\cE_N = (G^\ssup{N}, \cA_N, \mathfrak d_N)$.

Supposing that all vertices have distinct distances to the root $o$ and that $\cA_{N-1}\not=\emptyset$ (or, equivalently, $v^\star(N)\not= o$), the set $\cA_N$ ($N\in\N$) consists of all vertices $v\in V$ satisfying either
\begin{itemize}
\item[(A1)] $d(o,v-)  \leq d(o,v^\star(N))< d(o,v)$ and  the \hopcount distance from $o$ to $v$ is strictly  greater than $R$, or
\item[(A2)]  $ d(o,v^\star(N))< d(o,v)$ and the \hopcount distance from $o$ to $v$ is $R$.
\end{itemize}

Supposing that geodesics are unique we note that for every vertex $v\in V|_{\geq R}$ the geodesic from $o$ to $v$ passes through a unique  vertex in $V|_{=R}$ and we denote by $v^\star_R(N)$ the  vertex that is traversed by the geodesic to $v^\star(N)$. If  $v^\star(N)= o$ we set $v^\star_R(N)=o$. We call the set of vertices from $V|_{\geq R}$ whose geodesics pass through a vertex $v\in V|_{=R}$ the branch of $v$.

Based on a parameter $\eps>0$ we distinguish four kinds of active vertices:
\begin{enumerate}
\item  $v\in \cA_N$ is in the branch of  $v^\star_R(N)$ and satisfies
$d(o,v)\leq d(o,v^\star(N))+\eps$
\item  $v\in \cA_N$ is in the branch of  $v^\star_R(N)$ and satisfies
$d(o,v)> d(o,v^\star(N))+\eps$
\item  $v\in \cA_N$ is not in the branch of  $v^\star_R(N)$ and satisfy
$d(o,v)\leq d(o,v^\star(N))+\eps$
\item  $v\in \cA_N$ is not in the branch of  $v^\star_R(N)$ and satisfy
$d(o,v)> d(o,v^\star(N))+\eps$
\end{enumerate}
We will later see that in the explosive regime an exploration of large configuration models $G_n$ yields for $\eps>0$ small and $N$ large configurations that  typically feature a large number of vertices of type (i), no vertices of type (iii) and a relatively small number of vertices of type (iv). See Figure~\ref{fig:idea} for an illustration of the exploration process.

\begin{figure}[ht!]
\begin{center}
\includegraphics[width=0.9\textwidth]{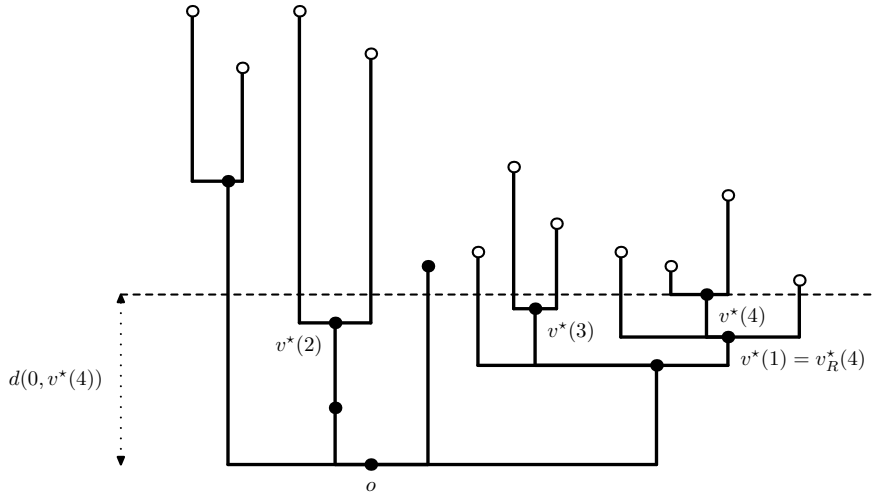}%
\caption{The exploration process with $R =2$ after $N = 4$ iterations of the exploration process for a realisation of $T$. The vertical distances between the vertices correspond to the edge weights. The black vertices have been explored, while the white vertices are in $\cA_4$.}
\label{fig:idea}
\end{center}
\end{figure}

Our analysis relies on two  facts about the exploration of configuration models  with asymptotic degree distribution $\cD$ (as defined above):
\begin{enumerate}
\item For every $N\in\N$ there exists a coupling of the neighbourhoods $(G_n,o_n)$ of the configuration model 
with the random rooted tree $T$ such that with high probability the corresponding $N$-step explorations $\cE_N^{(n)}$ and $\cE_N$ of $(G_n,o_n)$ and $T$ are isomorphic. More explicitly, there exists a coupling together with events $\Om_n\subset \Om$ and a random  mapping $\varphi_n$ such that $\P(\Om_n)\to1$ and  for every $n\in\N$ and $\om\in\Om_n$
\begin{itemize} \item $\varphi_n(\om,\cdot)$ is a bijection between the sets of vertices of $T^{(N)}$ and the ones  of $G_n^{(N)}$ and
\item the $N$-step exploration  $\cE_N^{(n)}$ is obtained from $\cE_N$ by relabeling the vertices by application of $\varphi_n(\om,\cdot)$.
\end{itemize}
The coupling can be achieved in a Markovian way meaning that the conditional law of $(G_n,o_n)$ given $\cE_N^{(n)}$ is identical with the one given $(\cE_N^{(n)},\cE_N)$. This means that the coupling does not reveal additional information about the undisclosed parts.
\item Define $G_n^\star(N)$ as the weighted graph obtained from $G_n$ by removing all edges appearing in $\cE_N^{(n)}$.
Then, given the exploration $\cE_N^{(n)}$ (and hence also given $(\cE_N^{(n)},\cE_N)$), the graph $G_n^\star(N)$ 
is a configuration model with random degree sequence.  Note that all vertices that have been explored in the first~$N$ steps   have degree $0$ in $G_n^\star(N)$. Further in the case that $G_n^\ssup{N}$ is a tree, every active vertex $v$ has degree $\mathrm {deg}_{G_n}(v)-1$ in $G_n^\star(N)$.
\end{enumerate}

\subsection{Auxiliary lemmas for the branching process}\label{ssn:galton}

First we prove some auxiliary results for branching processes with explosion.
Suppose that $T$ as introduced in~\ref{ssn:main_results} has with strictly positive probability a finite explosion time
$$
\tau= \inf\{ t \geq 0 \, : \, \# \{ v : d_T(o,v) < t \} = \infty \}.
$$
For $v\in V$, we write $|v|$ for the \hopcount distance between $v$ and the root $o$. Further, we denote for $v\in V\backslash \{o\}$  by $v-$ the unique neighbouring vertex in $\gw$ with $|v\!-|=|v|-1$.  
 We call the distance 
\[ S_v := d_{T}(o, v), \]
the \emph{birth time} of the vertex $v$ and denote by $\cS$ the  event that 
\[ \cS = \{ \tau < \infty \} , \]
which as we will see is indistinguishable from the event that $T$ survives.

Our analysis is based on the  following properties for explosive Galton-Watson processes, which follow easily from 
the fact that the distribution $\mu$ has no atoms.
For a proof of the last claim, see~\cite[Claim~2.4]{komjathy_explosion}.

\begin{lemma}\label{le:prop_explosion}
Almost surely, all times
$(S_v:v\in V)$ are distinct and the distribution of $\tau$ has no atoms beside $\infty$. Further on $\{\tau<\infty\}$ there is exactly one infinite random  ray $\xi = (o = \xi_0, \xi_1, \ldots )\in V^\infty$,
with $\xi_{n-1}=\xi_n-$ for $n\in\N$ and
$$
\tau =\lim_{\ell\to\infty} S_{\xi_\ell}.
$$
Moreover, conditionally on the event  $\{ \#T = \infty\}$
 we have  $\tau < \infty$, almost surely. 
\end{lemma}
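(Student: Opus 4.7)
The plan is to dispatch the four assertions in turn, with the atomless nature of $\mu$ doing most of the work.

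First I would establish distinctness of the birth times. For any two distinct vertices $v, v' \in V$ with most recent common ancestor $u$, the difference $S_v - S_{v'}$ is a signed sum over the finitely many independent $\mu$-distributed weights on the two edge-disjoint paths from $u$ to $v$ and to $v'$; convolution with an atomless distribution is atomless, so $\P(S_v = S_{v'}) = 0$, and a countable union over pairs of vertices in the (random) set $V$ yields the first claim.

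For the no-atoms property of $\tau$ on $(0, \infty)$ I would exploit the recursive decomposition $\tau = \min_{i : \sigma_i < \infty}(w_i + \sigma_i)$ (with $\min \emptyset = +\infty$), where $w_1, \dots, w_D$ are the weights of the edges from $o$ to its children and $\sigma_1, \dots, \sigma_D$ are the explosion times of the corresponding subtrees. Conditionally on the subtree structure, each $w_i$ is $\mu$-distributed and independent of $\sigma_i$, so $w_i + \sigma_i$ has atomless conditional distribution on $(0, \infty)$, and so does the minimum over the non-empty random subset of indices with $\sigma_i < \infty$. Integrating out proves the claim, and the same argument applies verbatim to the explosion time of any subtree rooted at a non-root vertex.

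The existence and uniqueness of the infinite ray on $\{\tau < \infty\}$ is the step I expect to require the most care. For existence, for each $k \in \N$ the subtree $\{v \in V : S_v < \tau + 1/k\}$ has infinitely many vertices (otherwise $\tau \geq \tau + 1/k$), and being locally finite it contains an infinite ray $\xi^{(k)}$ by K\"onig's lemma. A standard diagonal extraction, using the finiteness of each generation of $T$, produces a single infinite ray $\xi$ whose initial segment $(\xi_0, \dots, \xi_\ell)$ coincides with that of $\xi^{(k)}$ for arbitrarily large $k$; this forces $S_{\xi_\ell} \leq \tau$ for every $\ell$ and hence $\lim_\ell S_{\xi_\ell} \leq \tau$, while the reverse inequality is immediate from the definition of $\tau$. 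For uniqueness, any two distinct rays achieving $\tau$ must diverge at some vertex $u$ into distinct children $v \neq v'$, and then both $w_{(u,v)} + \sigma_v$ and $w_{(u,v')} + \sigma_{v'}$ must equal $\tau - S_u$. These two quantities are independent and, by the previous paragraph, atomless on the event of being finite, so the probability of coincidence is zero at each splitting vertex; a countable union over all such splittings yields uniqueness.

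Finally, the statement that $\tau < \infty$ almost surely on $\{\#T = \infty\}$ is classical for Galton--Watson trees with i.i.d.\ edge weights in the explosive regime, and I would cite \cite[Claim~2.4]{komjathy_explosion} directly, as suggested in the lemma statement.
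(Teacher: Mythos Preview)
Your proposal is correct and supplies the details that the paper omits: the paper does not prove this lemma at all, merely remarking that the first three claims ``follow easily from the fact that the distribution $\mu$ has no atoms'' and citing \cite[Claim~2.4]{komjathy_explosion} for the last. Your arguments are exactly the natural ones this remark points to, so there is no divergence in approach to discuss.
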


We define
\[ V|_{=R}   = \{ v \in V : \, |v| = R \} \, . \] 
and 
\[ V|_{\geq R} = \{ v \in V : \, |v| \geq R \} , \] 
and $V|_{> R}= V|_{\geq R} \setminus V|_{=R}$. 
Finally, for every $v \in V$ let $T(v)$ denote the subgraph obtained from $T$ by keeping the vertex $v$ and all its descendants and by declaring $v$ as root.  We denote by $V(v)$ the respective set of vertices.

We apply the exploration introduced in Section~\ref{ssn:exploration} for the rooted random graph $(G,o)=T=(V,E,o)$. In particular, we use the notation for $v^\star(N)$ and $v^\star_R(N)$ as introduced there. 
In a first step we show that for $\eps>0$ and sufficiently large $N$, typically, the number of edges in the branch of $v_R^\star(N)$ that leave $G^{(N)}$ is large and further that the explosion occurs in the branch of $v_R^\star(N)$.

\begin{lemma}\label{le:gw1}\label{cor:number_good_guys}   Let  $R\in\N$ and  $\eps>0$ and consider for $N\in\N$
$$
A_{R,\eps}(N)=\{  v \in V (v_R^\star(N)) \, : \,  S_{v-} < S_{v^\star(N)}\leq S_v<S_{v^\star(N)}+\eps \}.
$$
Then one has 
$$
\lim_{N\to\infty} \p(\mbox{explosion traverses $v_R^\star(N)$}\,|\,  v^\star(N) \neq o)=1
$$
and
for every $\ell, R \in \N, \eps>0$, 
$$\lim_{N\to\infty} \p \Big( \sum_{v \in A_{R,\eps}(N)} (\deg_\gw(v) -1)  \leq \ell \, \Big| \, v^\star(N) \not =o	 \Big) =0.$$
\end{lemma}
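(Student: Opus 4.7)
The plan is to work on the explosion event $\cS = \{\tau < \infty\}$. First I note that $\{v^\star(N) \neq o\}$ is monotone decreasing in $N$ with limit equal to the event that at least one subtree $T(w)$, $w \in V|_{=R}$, is infinite, which coincides with $\{\#T = \infty\}$. By Lemma~\ref{le:prop_explosion}, on this event $\tau < \infty$ almost surely, so up to null sets both parts of the lemma reduce to assertions about the exploration on $\cS$.

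For the first claim, the key observation is that strict positivity of the weights forces the exploration to inspect the vertices of $V|_{\geq R}$ in strictly increasing order of $S_v$; the set explored by step $N$ is exactly the $N$ smallest elements of $V|_{\geq R}$ in $S$-order. On $\cS$ one has $\tau = \min_{w \in V|_{=R}} (S_w + \tau_w)$, where $\tau_w$ is the explosion time of $T(w)$ measured from its own root, and uniqueness of the ray $\xi$ from Lemma~\ref{le:prop_explosion} forces this minimum to be attained uniquely at $w = \xi_R$. Consequently every branch $w \neq \xi_R$ contains only finitely many vertices with $S_v < \tau$, whereas $V(\xi_R)$ contains the infinite sequence $\xi_R, \xi_{R+1}, \ldots$ Since $S_{v^\star(N)} < \tau$ for every $N$, after finitely many steps the non-$\xi_R$-branch supply of vertices with $S_v < \tau$ is exhausted, so every subsequent $v^\star(N)$ lies in $V(\xi_R)$, giving $v_R^\star(N) = \xi_R$ eventually.

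For the second claim, set $s := S_{v^\star(N)}$ and note that $s \uparrow \tau$ on $\cS$, so for $N$ large $s > \tau - \eps$ and hence $s + \eps > \tau$. Introduce $B_s := \{v : S_v < s\}$ and the full frontier $\Phi_s := \{v \in V(\xi_R) : S_{v-} < s \leq S_v\}$. Counting edges leaving $B_s \cap V(\xi_R)$ yields the identity
\[
\sum_{v \in B_s \cap V(\xi_R)} (\deg_T(v) - 1) \;=\; |B_s \cap V(\xi_R)| - 1 + |\Phi_s|.
\]
Since $|B_s \cap V(\xi_R)| \to \infty$ as $s \to \tau^-$ and the offspring counts $(\deg_T(v) - 1)_{v \in V(\xi_R)}$ are i.i.d.\ with distribution $D^\star - 1$ and mean $\E[D^\star - 1] > 1$, a strong-law argument gives $|\Phi_s| \to \infty$ almost surely. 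Because $\mu$ charges every interval $[0, \eps)$, a uniformly positive fraction of the frontier vertices additionally satisfies $S_v < s + \eps$, so $|A_{R,\eps}(N)| = |\Phi_s \cap \{S_v < s + \eps\}| \to \infty$ in probability. A final weak law applied to the i.i.d.\ $D^\star - 1$ offspring counts of the vertices in $A_{R,\eps}(N)$ then gives $\sum_{v \in A_{R,\eps}(N)}(\deg_T(v) - 1) \to \infty$ in probability, which is the second claim.

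The main technical obstacle is the concentration step for the $\eps$-window: showing that a uniformly positive fraction of $v \in \Phi_s$ satisfies $S_v < s + \eps$ as $s$ approaches $\tau$. This requires controlling the age distribution $s - S_{v-}$ of the frontier in a supercritical Crump--Mode--Jagers process with finite explosion time, together with careful bookkeeping of the size-biased law of $T$ on $\cS$ where the spine $\xi$ appears. The techniques of~\cite{komjathy_explosion} and classical Jagers--Nerman CMJ theory provide the right framework for this estimate.
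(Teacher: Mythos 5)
Your treatment of the first claim is sound and in essence the same as the paper's: you reduce to $\cS=\{\tau<\infty\}$ via the indistinguishability of $\{\#T=\infty\}$ and $\cS$, and then argue that on $\cS$ the exploration must eventually stay in the branch $V(\xi_R)$ because all other branches contribute only finitely many vertices with birth time below $\tau$.

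The second claim is where there is a genuine gap, and it is precisely the step you yourself flag as the ``main technical obstacle''. You want to show $\#A_{R,\eps}(N)\to\infty$ by (a) deducing from an edge-counting identity and a strong law that the full frontier $\Phi_s$ of $V(\xi_R)$ diverges as $s\uparrow\tau$, and then (b) asserting that a uniformly positive fraction of the frontier lands in the $\eps$-window $[s,s+\eps)$. Step (b) is not justified; it is a nontrivial statement about the age distribution of the frontier of an explosive Crump--Mode--Jagers process in the window just before blow-up, and classical Jagers--Nerman theory (which is designed for the Malthusian, non-explosive regime) does not directly give it. Step (a) also needs care: the enumeration of $B_s\cap V(\xi_R)$ is weight-dependent, so invoking a plain SLLN for the offspring counts requires an optional-stopping/Wald-type justification, not merely i.i.d.\ structure. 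The paper avoids all of this with a much shorter argument that you may find worth comparing against: since explosion passes through $v^\star_R(N)$ and $S_{v^\star(N)}\uparrow\tau<\infty$, for $N$ large the explosion must occur within time $\eps$ of $S_{v^\star(N)}$, which forces at least one $v\in A_{R,\eps}(N)$ to have $\tau(v)\leq\eps$; but conditionally on $A_{R,\eps}(N)$ the subtree explosion times $(\tau(v))_{v\in A_{R,\eps}(N)}$ are i.i.d.\ copies of $\tau^*$, so
\[
\E\bigl[\1_{\{v^\star(N)\neq o\}}\,\P(\tau^*>\eps)^{\#A_{R,\eps}(N)}\bigr]\to 0,
\]
which forces $\#A_{R,\eps}(N)\to\infty$ in probability on $\{\#V=\infty\}$ without any control on the frontier's age distribution. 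Once divergence of $\#A_{R,\eps}(N)$ is established, your final weak-law step for the degree sum is correct, because given the exploration (minus the degrees of the active vertices) the degrees $(\deg_T(v))_{v\in A_{R,\eps}(N)}$ are indeed i.i.d.\ $D^\star-1$.
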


\begin{proof} 
Recall that $\xi$ is the ray leading to explosion. Then, we have that
\[ \p (v^\star(N) \neq o, v^\star_R(N) \neq \xi_R) 
\leq \P ( \tau < \infty, v^\star(N) \notin \xi) + \p ( v^\star(N) \neq 0, \tau = \infty) . \]
As $N \ra\infty$ the first probability on the righ hand side tends to zero by the definition of $v^\star(N)$, while the second 
probability tends to zero since $\bigcap_{N \in \N} \{ v^\star(N) \neq o \} = \{ \# V = \infty \}$ 
and by Lemma~\ref{le:prop_explosion} the latter event is indistinguishable from $\cS = \{ \tau  < \infty\}$. 
In particular, since $\p(\cS) > 0$ this shows the first part of the lemma. 

For the second part, 
we first show that for every $\ell>0$  \begin{align}\label{eq83576} \lim_{N\to\infty} \p ( \#A_{R,\eps}(N) \leq \ell \, | \, v^\star(N) \neq o) =0.\end{align}
By the same argument as in the first part, it suffices to prove the statement conditionally on $\{ \# V = \infty \}$ rather than $v^\star(N) \neq o$.

Consider the following events
$$ 
E_1(N):=\{ v^\star(N)\not= o, \text{no explosion among offspring of }  v^{\star}_R(N) \mbox{ before time } S_{v^\star(N)}+\eps\}
$$
and
$$
E_2(N):=\{v^\star(N)\not= o, \text{explosion ray does not traverse }v^\star_R(N)\}.
$$
Note that $\lim_{N\to\infty} \P(E_2(N))= 0$ by the first part. 
Furthermore, note that also $\P(E_1(N))$ tends to $0$ since
$$
\P(E_1(N)) \leq \P(E_2(N)) + \P( \tau>S_{v^\star(N)}+\eps)\to 0.
$$
For $v\in V$ we let  $\tau(v)$ denote the explosion time of the subtree $T(v)$. Then
$$
 E(N):=\{v^\star(N)\not=o \text{ and } \forall v\in A_{R,\eps}(N): \tau(v)>\eps\} \subset E_1(N)\cup E_2(N)
$$
and given  $A_{R,\eps}(N)$ and $\{v^\star(N)\not= o\}$ the explosion times $(\tau(v): v\in A_{R,\eps}(N))$ form a sequence of independent and identically distributed random variables
with the same distribution as the random variable $\tau^*$, which is the explosion time if the underlying Galton-Watson tree has
offspring distribution $D^\star-1$ throughout. We conclude that
\begin{align*}
 \E[ \1\{v^\star(N)\not=o\} \, \P(\tau^*>\eps)^{\# A_{R,\eps}(N)}]=\P(E(N)) \leq \P(E_1(N))+\P(E_2(N))\to 0.
\end{align*}
Using that $\{ \# V = \infty \} \subset\{v^\star(N)\not=o\}$ we get that 
$$
\1_{\{ \# V  =\infty \}} \, \P(\tau^*>\eps)^{\# A_{R,\eps}(N)}\to 0, \text{ in probability},
$$
which implies that $\# A_{R,\eps}(N)\to \infty$, in probability, on $\{ \# V = \infty \}$.

The second statement follows immediately from property~(\ref{eq83576}) by noting that given $A_{R,\eps}(N)$ and $\{v^\star(N)\not=o\}$ the degrees $(\deg_T(v):v\in A_{R,\eps}(N))$ form a vector of independent random variables with the same distribution as $D^\star-1$.
\end{proof}

 As we have seen above in the event of explosion one typically has explosion along the vertex $v^\star_R(N)$ and the descendants   in $A_{R,\eps}(N)$ have a diverging number of stubs  that leave $G^{(N)}$.
Let us now consider the active vertices belonging to other branches. 
For $N\in\N$ we let 
\[  A'_R(N) = \cA_N \setminus V(v_R^\star(N))  . \]

\begin{lemma}\label{le:eps}
 For every $R \in \N$, 
\[ \lim_{\eps \downarrow 0 } \limsup_{N \ra \infty} \p (v^\star(N)\not= o,  \exists v \in A'_R(N) \, : \,  S_v < S_{v^*(N)} + \eps ) = 0   \]
and 
\[ \lim_{ \kappa \ra \infty} \limsup_{N \ra \infty} \p \Big(v^\star(N)\not=o,  \sum_{v \in A'_R(N) } (\deg_{\gw}(v)-1) \geq \kappa \Big) = 0 . \]
\end{lemma}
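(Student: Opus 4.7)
My plan is to exploit the uniqueness of the explosion ray $\xi$ together with Lemma~\ref{le:gw1}, which already guarantees that the exploration traverses $\xi_R$ with high probability. Working on the explosion event $\cS=\{\tau<\infty\}$ — off $\cS$ the tree is a.s.\ finite, so $v^\star(N)=o$ for $N$ large and the probabilities in the statement vanish — I first restrict to the event $\{v^\star_R(N) = \xi_R\}$ provided by Lemma~\ref{le:gw1}. For every $v_i \in V|_{=R}$ with $v_i \neq \xi_R$ the subtree $T(v_i)$ contains no explosion ray achieving $\tau$, so its internal explosion time $\tau_i := \inf\{t : \#\{v \in V(v_i):S_v<t\}=\infty\}$ strictly exceeds $\tau$; otherwise $T$ would possess a second infinite ray achieving $\tau$, contradicting Lemma~\ref{le:prop_explosion}.

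The key random quantity is the gap
\[
\eta := \mintwo{v_i \in V|_{=R}}{v_i \neq \xi_R} \inf\{S_v - \tau : v \in V(v_i),\; S_v > \tau\},
\]
with the convention $\inf\emptyset=\infty$. Since $\tau_i>\tau$ the set $V(v_i)\cap\{S_v\le \tau+1\}$ is finite, so the inner infimum is a minimum of finitely many strictly positive numbers (positive because $\P(S_v=\tau)=0$ for each fixed $v\notin V(\xi_R)$: by non-atomicity of $\mu$ applied to the weights that separate the ray leading to $v$ from $\xi$). Hence $\eta>0$ almost surely on $\cS$, and given $\delta>0$ one can fix $\eps_\delta>0$ with $\P(\cS,\eta\le 3\eps_\delta)<\delta$.

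For the first statement, on the good event $\cS\cap\{v^\star_R(N)=\xi_R\}\cap\{\eta>3\eps_\delta\}$ every $v\in V(v_i)$ with $v_i\neq\xi_R$ satisfies either $S_v<\tau$ or $S_v\ge\tau+3\eps_\delta$. The random set $F:=\bigcup_{v_i\neq\xi_R}V(v_i)\cap\{S_v<\tau\}$ is a finite union of finite sets, so there is a random $N_0$ such that all of $F$ has been explored by step $N_0$. Combined with the almost sure convergence $S_{v^\star(N)}\uparrow\tau$ on $\cS$, for all sufficiently large $N$ every $v\in A'_R(N)$ must satisfy $S_v\ge\tau+3\eps_\delta > S_{v^\star(N)}+3\eps_\delta$, and in particular $S_v>S_{v^\star(N)}+\eps$ for each $\eps\le\eps_\delta$. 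Thus the probability in the first claim is bounded by $\delta+o_N(1)$; sending $\eps\downarrow 0$ (hence $\delta\downarrow 0$) concludes.

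For the second statement the same reasoning shows that on the good event, and for $N$ large enough, the set $A'_R(N)$ stabilises to the finite random set $A'_R(\infty):=\bigcup_{v_i\neq\xi_R}\{v\in V(v_i):S_{v-}<\tau<S_v\}$ (with the convention $S_{v-}=0$ for $|v|=R$); finiteness follows because each such vertex has its parent in the finite set $F\cup V|_{<R}$ and every parent has finite degree. Consequently $\sum_{v\in A'_R(N)}(\deg_\gw(v)-1)$ converges almost surely on the good event to the finite random variable $\Sigma:=\sum_{v\in A'_R(\infty)}(\deg_\gw(v)-1)$, yielding
\[
\limsup_{N\to\infty}\P\Bigl(v^\star(N)\neq o,\;\sum_{v\in A'_R(N)}(\deg_\gw(v)-1)\ge\kappa\Bigr)\le\delta+\P(\Sigma\ge\kappa),
\]
which tends to $0$ upon letting $\kappa\to\infty$ and then $\delta\to 0$. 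The main technical hurdle will be verifying $\eta>0$ a.s.\ and the finiteness of $A'_R(\infty)$; both rest on the uniqueness of the explosion ray (Lemma~\ref{le:prop_explosion}) and the absence of atoms in $\mu$.
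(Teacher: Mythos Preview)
Your argument is essentially the same as the paper's: both restrict to $\cS\cap\{v^\star_R(N)=\xi_R\}$ via Lemma~\ref{le:gw1}, use that the non-explosion branches $T(v_i)$ have $\tau_i>\tau$ so contain only finitely many vertices born before $\tau$, and then exploit a random positive gap around $\tau$ (your $\eta$, the paper's $\eps_0$) together with $S_{v^\star(N)}\uparrow\tau$; for the second claim you show $A'_R(N)$ stabilises while the paper simply bounds the sum by the fixed random variable $\sum_{v\in V|_{>R}\setminus V(\xi_R):\,S_{v-}<\tau}(\deg_T(v)-1)+\sum_{|v|=R,\,v\neq\xi_R}(\deg_T(v)-1)$, which is slightly quicker but amounts to the same thing.

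One genuine slip: you assert that ``since $\tau_i>\tau$ the set $V(v_i)\cap\{S_v\le\tau+1\}$ is finite''. This fails if $\tau_i\le\tau+1$. The fix is immediate---replace $\tau+1$ by any $t_i\in(\tau,\tau_i)$; then $\{v\in V(v_i):S_v<t_i\}$ is finite by definition of $\tau_i$, the infimum over $\{\tau<S_v<t_i\}$ is a minimum of finitely many strictly positive numbers, and for $S_v\ge t_i$ one has $S_v-\tau\ge t_i-\tau>0$. With this correction your justification that $\eta>0$ a.s.\ on $\cS$ goes through. Your claim $\P(S_v=\tau)=0$ for $v\notin V(\xi_R)$ is correct but would benefit from the paper's cleaner formulation: conditionally on $T|_{\leq R}$, the subtrees $(T(w):w\in V|_{=R})$ are independent with absolutely continuous birth and (finite) explosion times, so $\tau=\tau(\xi_R)+S_{\xi_R}$ and $S_v$ are a.s.\ distinct.
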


\begin{proof} 
For a vertex $v\in V$ we denote by
$$
\tau(v)=\inf\{t\geq 0: \#\{w\in V(v): S_w\leq t\}=\infty\}
$$
the explosion time along the vertex $v$ (which may be infinite). Since given   $T|_{\leq { R}}$ the subtrees $(T(v):v\in V|_{={R}})$  are independent with all birth and explosion times being absolutely continuous (besides an atom in $\infty$), almost surely, 
all birth times and explosion times not equal to $\infty$ are pairwise distinct. We continue arguing conditionally on $\{\tau<\infty\}$. In this case there exists a random  $\eps_1>0$ such that for the vertex $\xi_R\in V|_{=R}$ on the ray to explosion one has
$$
\tau(\xi_R)+\eps_1<\min_{w\in V[=R]\backslash \{\xi_R\}} \tau(w).
$$
Consequently, there are at most finitely many vertices in $\bigcup_{w\in  V|_{\geq R}\setminus V(\xi_R) 
}  V(w)$ with birth time in $(0,\tau(\xi_R)+\eps_1)$ and since none of these equals $\tau=\tau(\xi_R)$ there exists a random $\eps_0>0$ such that
all birth times of the vertices in $\bigcup_{w\in  V|_{ \geq R}\setminus V(\xi_R)%
} V(w)$ do not intersect $(\tau-\eps_0,\tau+\eps_0)$.
Note that occurrence of the  event $\{ \exists v\in A'_R(N) \, : \,  S_v < S_{v^*(N)} + \eps \}\cap \cS$
implies that $\tau<\infty$ and that at least one of the following events holds:
\begin{itemize}
\item $v^\star_R(N)\not= \xi_R$,
\item $S_{v^\star(N)}<\tau-\eps$,
\item $\eps_0<\eps$.
\end{itemize}
The probabilities of the first and second event tend to zero as $N\to\infty$: the first one due to  Lemma~\ref{le:gw1} and the second one due to pointwise convergence  $S_{v^\star(N)}\to\tau$  on $\{\tau<\infty\}$. Consequently,
\begin{equation}\label{eq:0610-2}
\limsup_{N \ra \infty} \p (\cS\cap\{  \exists v \in A'_R(N) \, : \,  S_v < S_{v^*(N)} + \eps  \}) \leq  \P(\cS \cap \{\eps_0<\eps\})
\end{equation}
and the first statement follows by letting $\eps\downarrow 0$.

Note that in the case when $\cS\cap \{v^\star_R(N)=\xi_R\}$ holds we have 
$$
 \sum_{v \in A'_R(N) } (\deg_{\gw}(v)-1) \leq \!\!\!\sum_{v\in V|_{>R}\backslash V(\xi_R)\,:\,S_{v-}<\tau}(\deg_{\gw}(v)-1) +  \sum_{|v|=R, v \neq \xi_R} (\deg_{\gw}(v) -1).
$$
The term on the right hand side is almost surely finite and does not depend on $N$. Hence the second statement follows by observing that $\p(\{v^\star(N)\not =o\}\backslash \cS)$ and $\p(\cS, v^\star_R(N)\not =\xi_R)$ tend to zero as $N\to\infty$.
\end{proof}

\subsection{Proof of Theorem~\ref{thm:local_conv} in the case of explosion}\label{ssn:proof_explosion}

We now complete the proof of Theorem~\ref{thm:local_conv}, where we use the notation 
of the previous sections.  In particular, $(\cE_N)_{N \in \N_0}$ denotes the exploration of 
$T$, whereas $(\cE_N^\ssup{n})_{N \in \N_0}$  refers to the exploration of $(G_n,o_n)$. Moreover, the $N$-th explored vertex in $\cE_N$ is denoted by $v^\star(N)$ and $v^\star_n(N)$  refers to the corresponding vertex in the exploration $\cE_N^\ssup{n}$.

\begin{proof}[Proof of Theorem~\ref{thm:local_conv}  in the case of explosion.]
We generate $\cT$ by first generating $T$ and an independent Bernoulli random variable $I$ with success probability $\P(\cS)$. In the case that $\cS$ occurs and $I=1$ we mark the unique explosion ray from $o$ to infinity in red. Otherwise all edges of the rooted tree $T$ are coloured  black.

Fix $R \in \N$  and $\delta \in (0,1)$ throughout. 
We apply the exploration $(\cE_N:N\in\N_0)$ as introduced in Section~\ref{ssn:exploration} for the random rooted tree $T$. We consider the events 
 \begin{itemize}
 \item $E_{-1}=E_{-1}^\ssup{N}=\{\1_{\{v^\star(N)\neq o\}}=\1_\cS\}$
 \item $E_0=E_0^\ssup{N}=\{v^\star(N)=o\text{ or  explosion ray traverses }v_R^\star(N)\}$
\item $E_1=E_1^\ssup{N}=  \{ v^\star(N)= o\text{ or } \forall v \in A'_R(N) \, : \, S_v \geq S_{v^\star(N)} + \eps \}$
 \item $E_2=E_2^\ssup{N}=\{ \sum_{v \in A'_R(N)} (\deg_\gw(v)-1) \leq \kappa_2 \}$
 \item $E_3=E_3^\ssup{N}=\{v^\star(N)= o\text{ or }  \sum_{v \in A_{R,\eps}(N)} (\deg_\gw(v) -1) \geq \kappa_3 \}$
 \end{itemize}
 for parameters $\kappa_0,\kappa_2,\kappa_3,\eps>0$ to be fixed within the next lines. 
 
 Since $\bigcap_{N\in\N} \{v^\star(N)\neq o\}=\{\#V=\infty\}$ and the latter event is indistinguishable from $\cS$ we have $\p(E_{-1}^{\ssup{N}})\geq 1-\delta$ for all sufficiently large $N\in\N$. 
By Lemma~\ref{le:gw1} the probability of $E_0^\ssup{N}$ tends to one and thus $\p(E_0^\ssup{N})\geq 1-\delta$ for all sufficiently large $N$. 
By Lemma~\ref{le:eps} we can fix  $\eps>0$ and $\kappa_2\in\N$ such that $\p(E_1^\ssup{N})\geq 1-\delta$ and $\p(E_2^\ssup{N})\geq 1-\delta$ for all sufficiently large $N$.  We choose $\kappa_3=\frac {1-\delta}{\delta}\kappa_2$ and note that  by Lemma~\ref{cor:number_good_guys} $\p(E_3^\ssup{N})\geq 1-\delta$  for sufficiently large $N\in\N$. 
 Further by standard arguments one has for sufficiently large $N$ for sufficiently large $n\in\N$ that the event 
 \begin{itemize} 
 \item $E_4=E_4^\ssup{N,n}=\{\text{either }v_{n}^\star(N)= o_n \text{ or }o_n\in C^\ssup{n}_\mathrm{max}\}$
\end{itemize}
satisfies $\p(E_4^\ssup{N,n})\geq 1-\delta$.
 We now fix $N\in\N$  such that all the above events occur at least with probability $1-\delta$ for sufficiently large $n\in\N$.

Note that we can couple  $(T,I)$ with each individual random rooted graph $(G_n,o_n)$ in a Markovian manner such that for the respective explorations  $\lim_{n\to\infty} \p(\cE_N^\ssup{n}\sim \cE_N)=1$. Consequently, one has for sufficiently large  $n\in\N$ that
\begin{itemize}
\item $E_5=E_5^\ssup{N,n}=\{\cE_N^\ssup{n}\sim \cE_N\}$
\end{itemize}
satisfies $\p(E_5^\ssup{N,n})\geq 1-\delta$.  Given $(T,G_n,o_n,I)$ we generate a random vertex $u_n'$ of $V_n$ as follows: 
if $I=1$ we pick $u_n'$  uniformly at random from $C^\ssup{n}_\mathrm{max}\backslash \cE_N^\ssup{n}$ and if  $I=0$
 uniformly at random from $V_n\backslash (C^\ssup{n}(o_n)\cup C^\ssup{n}_\mathrm{max})$,  where $C^\ssup{n}(o_n)$ denotes the connected component of $G_n$ containing $o_n$. In the case that one of the latter sets is empty we assign $u_n'$ a dummy value. 
 We note that the total variation distance between $(G_n,o_n,u'_n)$ and $(G_n,o_n,u_n)$ (with $u_n$ a uniform vertex of $V_n$, independently chosen of $(G_n,o_n)$)  is bounded by
$$
\frac 1n \E\bigl[\#\{\text{vertices in $\cE_N^n$}\}+\ind_{\{o_n\not\in C_\mathrm{max}^\ssup{n}\}} \# C^\ssup{n}(o_n)+ |\#C^{(n)}_\mathrm{max}-n \P(\cS)|\bigr] 
$$
which tends to zero as $n\to\infty$. On a sufficiently rich probability space we can alter the definition of  $u_n$ such that the distribution of $(G_n,o_n,u_n)$ remains unchanged and 
$u_n=u_n'$,  with high probability.
We denote
\begin{itemize}
\item $E_6=E_6^\ssup{N,n}=\{u_n=u_n'\}$.
\end{itemize}
We are now in the position  to fix $n\in\N$ sufficiently large such that the probability of the events $E_4,E_5,E_6$ exceed $1-\delta$.

We introduce one additional event
\begin{itemize}
\item $E_7=E_7^\ssup{n}=\{o_n\not \in C_\mathrm{max}^\ssup{n}\text{ or }  I=0 \text{ or }$\\
\phantom{1235}\qquad $o_n \text{ and } u'_n \text{ are connected by a geodesic passing through }v_{R,n}^\star(N) \}$.
\end{itemize}

First we show that in the case that all events $E_{-1},E_0, E_4, E_5, E_6$ and $E_7$ occur the corresponding random isomorphism $\varphi_n$ taking $\cE_N$ to $\cE_N^\ssup{n}$ also maps the coloured tree $\cT|_{\leq R}$ to the coloured neighbourhood $(G_n,o_n,c_n)|_{\leq R}$. 
Indeed, it takes $T|_{\leq R}$ to $(G_n,o_n)|_{\leq R}$ and to verify that also the colourings  coincide we distinguish three different cases. If $\cS$ occurs  then $o_n$ is in $C_{\mathrm{max}}^\ssup{n}$ ($E_{-1}$, $E_4$). If additionally $I=1$, then in $\cT|_{\leq R}$ we have coloured the geodesic from $o$ to $v_R^\star(N)$ in red ($E_{-1}$, $E_0$). Moreover, $(G_n,o_n,c_n)|_{\leq R}$ has a coloured geodesic from $o_n$ to $u_n=u_n'$ passing through $v_{R,n}^\star(N)=\varphi_n(v_{R}^\star(N))= \varphi_n(\xi_R)$ ($E_0$, $E_6$, $E_7$).
Conversely if $\cS$ and $I=0$ occur the graph $\cT$ is black by definition and $(G_n,o_n,u_n)$ is black since $u_n=u_n'\not \in C^\ssup{n}(o_n)$ ($E_6$) and definition of $u_n'$. 
It remains to consider the case that $\cS^c$ occurs. In that case all edges in $\cT$ are black. Further since $v^\star(N)=0$ ($E_{-1}$) we have $v_n^\star (N)=o_n$ ($E_5$) and hence     $o_n\not\in C_\mathrm{max}^\ssup{n}$ ($E_4$). By definition of $u_n'$ we have $u_n=u_n'\not \in C^\ssup{n}(o_n)$ ($E_6$) so that also all edges of  $(G_n,o_n,u_n)$ are coloured in black.

It remains to show that for every $\eta>0$ we can choose $\delta>0$ sufficiently small to guarantee that
$$
\P(E_{-1}\cap E_0 \cap  E_4 \cap  E_5 \cap  E_6 \cap E_7)\geq 1-\eta.
$$
By definition of the events one has
$$
\P\Big(\bigcap_{i=-1}^6 E_i\Big)\geq 1- 8\delta
$$
and it remains to control the probability of  $E_7$. Note that   $E_1\cap E_2\cap E_3\cap E_5$ is in the $\sigma$-field  $\cF_N^\ssup{n}=\sigma(\cE_N,\cE^\ssup{n}_N)$. Further conditionally on the latter $\sigma$-field the graph that one obtains by removing all edges appearing in $\cE^\ssup {n}_N$ from $G_n$ yields a configuration model $G_n^\star(N)$ with random degree sequence. Note that under the conditional distribution the degrees of the \emph{active} vertices are deterministic. 

We continue arguing under the conditional distribution for a fixed realisation of the exploration for which $E_1,E_2,E_3$ and $E_5$ occur, say under the measure $P$. 
In analogy to $A_{R,\eps}$ and $A_{R}'$ we denote by $A_{R,\eps,n}$ and $A_{R,n}'$ the respective active vertices of the configuration $\cE_N^\ssup{n}$. 
We denote by $H$ the unique  half-edge in $G_n^\star(N)$ attached to a vertex in $A_{R,\eps,n}\cup A'_{R,n}$ traversed by the geodesic connecting $u'_n$ with $A_{R,\eps,n}\cup A'_{R,n}$ in $G_n^\star(N)$ provided such a geodesic exists. If such a geodesic does not exist we set $H=\partial$ for a dummy variable $\partial$. 

We denote by $\cH_{R,\eps,n}(N)$ and $\cH_{R,n}'(N)$  the set of all half-edges attached to the vertices  $A_{R,\eps,n}$ and $A'_{R,n}$ in $G_n^\star(N)$, respectively, and set
$\cH_n=\cH_{R,\eps,n}(N)\cup \cH_{R,n}'(N)$. For two distinct half-edges $h_1,h_2 \in \cH_n$ we associate the graph $G_n^\star(N)$ with a graph $G_n^{h_1,h_2}(N)$ that is obtained as follows: suppose that $h_1$ and $h_2$ form  in $G_n^\star(N)$ an edge with the half-edges $g_1$ and $g_2$, then we obtain $G_n^{h_1,h_2}(N)$ by rewiring the edges $\langle g_1,h_1\rangle$ and $\langle g_2,h_2\rangle$ as $\langle g_1,h_2\rangle$ and $\langle g_2,h_1\rangle$ in $G_n^\star(N)$, respectively. In the process the weights are kept. In particular, in the case where $h_1$ and $h_2$ form an edge, nothing changes. Since a configuration model is obtained by a uniform pairing of all half-edges, the latter operation has no effect on the distribution so that under $P$, $G_n^\star(N)$ and $G_n^{h_1,h_2}(N)$  are identically distributed. 
Note that the event $\{o_n\in C_\mathrm{max}^{\ssup n}, I=0\}$ occurs for $G_n^\star(N)$ if and only if it occurs for the rewired graph $G_n^{h_1,h_2}(N)$. Further one has 
$H=h_1$ for $G_n^\star(N)$ if and only if $H=h_2$ for the rewired graph  $G_n^{h_1,h_2}(N)$. Consequently, the value of
\begin{align*}
P(o_n \in C^{\ssup n}_\mathrm{max}, I=1, H=h)
\end{align*}
is constant for all $h\in\cH_n$ so that given $\tilde E_7=\tilde E_7^\ssup{n}:=\{o_n \in C^{\ssup n}_\mathrm{max}, I=1\}$  and $\{H\not=\partial\}$, $H$ is uniformly distributed on $\cH_n$. 
Note that by $E_1$ in the case that $\tilde E_7=\tilde E_7^\ssup{n}$ and $\{H\in A_{R,\eps,n}\cup\{\partial\}\}$ occur there exists a geodesic from $u_n'$  to $o_n$ and it traverses $v_{R,n}^\star(N)$. Consequently, by $E_2$, $E_3$ and choice of $\kappa_3$
$$
P(E_7) \geq \frac{\# \cH_{R,\eps,n}(N)}{\#\cH_n}\geq \frac {\kappa_3}{\kappa_2+\kappa_3}=1-\delta.
$$
Altogether we thus get 
$$
\P(E_1\cap E_2\cap E_3\cap E_5\cap E_7)\geq 1- 5\delta 
$$
and, finally,
$$
\P\Big(\bigcap_{i=-1}^7 E_i\Big)\geq 1- 9\delta, 
$$
which completes the proof of the theorem.
\end{proof}

%
\bibliographystyle{plain}

\end{document}